\documentclass[oneside, 12pt]{amsart} 
\usepackage{multirow}

\usepackage[utf8]{inputenc}
\usepackage[margin=1in]{geometry}
\usepackage{amsthm}
\usepackage{enumitem}
\usepackage[OT2,T1]{fontenc}
\usepackage{comment}
\usepackage{amscd,amssymb, enumitem, amsmath,mathrsfs, amsfonts, amsaddr}
\usepackage{url}
\usepackage{tikz}
\usepackage{fullpage}
\usepackage{microtype}
\usetikzlibrary{decorations.pathreplacing}
\usepackage[backref=page]{hyperref}
\usepackage{multirow}

\DeclareSymbolFont{cyrletters}{OT2}{wncyr}{m}{n}
\DeclareMathSymbol{\Sha}{\mathalpha}{cyrletters}{"58}

\usepackage[utf8]{inputenc}

\newtheorem{theorem}{Theorem}[section]
\newtheorem{lemma}[theorem]{Lemma}
\newtheorem{proposition}[theorem]{Proposition}
\newtheorem*{proposition*}{Proposition}

\newtheorem*{questiona*}{Question A}
\newtheorem*{questionb*}{Question B}
\newtheorem*{theorem*}{Theorem}

\theoremstyle{definition}

\newtheorem{conjecture}[theorem]{Conjecture}

\newtheorem{remark}[theorem]{Remark}

\theoremstyle{remark}

\title{On a conjecture of Deines}

\author{Mentzelos Melistas}
\address{University of Twente, Department of Applied Mathematics, Drienerlolaan 5, 7522 NB Enschede, The Netherlands}

\begin{document}

\maketitle

\begin{abstract}
    Two elliptic curves defined over $\mathbb{Q}$ are called discriminant twins if they have the same minimal discriminant and the same conductor. Deines, in 2014, conjectured that there exist infinitely many semi-stable non-isogenous discriminant twins. In this article we present an explicit infinite family of semi-stable non-isogenous discriminant twins, providing a proof for Deines’ conjecture.
\end{abstract}

\section{introduction}

We call two elliptic curves $E_1/\mathbb{Q}$ and $E_2/\mathbb{Q}$ discriminant twins if they have the same minimal discriminant and the same conductor. The study of these curves originates in Deines’ PhD thesis \cite{Deines2014} and is motivated by earlier work of Ribet and Takahashi, from \cite{ribettakahashi}. More precisely, discriminant twins are related to the problem of determining the optimal
quotient in a semi-stable elliptic curve isogeny class parametrized by a Shimura variety. 

If $E_1/\mathbb{Q}$ and $E_2 /\mathbb{Q}$ are discriminant twins, then by considering quadratic twists at primes of good reduction of $E_1/\mathbb{Q}$ (and hence of $E_2 /\mathbb{Q}$) we can generate infinitely many more discriminant twins. Such quadratic twists will have primes of additive reduction. Therefore, it makes sense to consider discriminant twins which are in addition semi-stable.

In \cite{Deines2018} Deines classified all semi-stable discriminant twins $E_1/\mathbb{Q}$ and $E_2/\mathbb{Q}$ under the additional condition that $E_1/\mathbb{Q}$ and $E_2/\mathbb{Q}$ are isogenous. In particular, Deines proved that there exist only finitely many semi-stable isogenous discriminant twins. The next step is to study semi-stable discriminant twins which are not isogenous. After performing computations on all curves up to conductor $299998$ in the Cremona database \cite{cremonadata}, Deines was led to the following conjecture.

\begin{conjecture}(see \cite[Conjecture 6.4.1]{Deines2014} and \cite[Conjecture 15]{Deines2018})
    There are infinitely many semi-stable non-isogenous discriminant twins.
\end{conjecture}

The main result in this article is a proof of the above conjecture. In fact we are able to achieve a more precise result; we produce an explicit infinite family of discriminant twins. Before we precisely state our main theorem we need some preparation. Let $t \geq 1$ be an integer and consider the pairs of numbers 
    \begin{align*}
        &A=(364t+1)(16(364t+1)-(364t+5)) \\
        &B=16(16(364t+5)^2-(364t+1)^2)
    \end{align*}
    and 
    \begin{align*}
        &A'=16(364t+5)^2-(364t+1)^2\\
        &B'=16(364t+5)(16(364t+1)-(364t+5)).
    \end{align*} Consider also the elliptic curves $E_{(A,B)}/\mathbb{Q}$ and $E_{(A',B')}/\mathbb{Q}$ given by the Weierstrass equations $$E_{(A,B)} \: : \: y^2=x(x-A)(x+B)$$ and $$E_{(A',B')} \: : \: y^2=x(x-A')(x+B').$$

    Our main theorem is the following.

    \begin{theorem}\label{maintheorem} For every integer $t \geq 1$ the curves $E_{(A,B)}/\mathbb{Q}$ and $E_{(A',B')}/\mathbb{Q}$ are semi-stable non-isogenous discriminant twins. \\
    In particular, there exist infinitely many semi-stable discriminant twins.
    \end{theorem}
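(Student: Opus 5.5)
Throughout write $a=364t+1$ and $b=364t+5$, so that
\begin{align*}
A&=a(16a-b), & B&=16(4b-a)(4b+a),\\
A'&=(4b-a)(4b+a), & B'&=16b(16a-b).
\end{align*}
The first step is a direct computation: $A+B=b(256b-a)$ and $A'+B'=a(256b-a)$. Hence
$$ABC = A'B'C' = 16\,ab\,(16a-b)(4b-a)(4b+a)(256b-a), \qquad C=A+B,\ C'=A'+B'.$$
So the two Legendre-form models have the same discriminant $16(ABC)^2$. The plan is to treat both curves as Frey curves and use the standard facts about $y^2=x(x-A)(x+B)$. If $\gcd(A,B)=1$, $A\equiv -1 \pmod 4$ and $16\mid B$, then the curve is semistable. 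Its minimal discriminant is $2^{-8}(ABC)^2$, and its conductor is the radical of $ABC$, with $2$ included only when $v_2(ABC)>4$. Applied to both curves, equality of $ABC$ and $A'B'C'$ then gives equal minimal discriminants and equal conductors at once.

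So the second step is to check these hypotheses for both pairs. Since $a\equiv b\equiv 1 \pmod 4$, we get $16a-b\equiv -1$ and $A\equiv -1 \pmod 4$. Also $A'=16b^2-a^2\equiv -1 \pmod{16}$. Both $B$ and $B'$ have $2$-adic valuation exactly $4$, because the remaining factors are odd. Thus $v_2(ABC)=4$, and both curves in fact have good reduction at $2$.

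For coprimality, note that $\gcd(a,b)\mid 4$ and both are odd, so $\gcd(a,b)=1$. Any common prime of two of the linear forms $a$, $b$, $16a-b$, $4b\pm a$, $256b-a$ must then divide a resultant such as $63$, $65$, $255$, etc. I would go through the pairs relevant to $\gcd(A,B)$ and $\gcd(A',B')$ one by one. For each prime $p\in\{3,5,7,13,17,\dots\}$ that can occur, I would check that the congruences $a\equiv 1$, $b\equiv 5 \pmod{7\cdot 13}$ and $364\equiv 1 \pmod 3$, $364\equiv 4 \pmod 5$ make the offending form nonzero mod $p$. For example, $16a-b\equiv a-b=-4 \not\equiv 0 \pmod{3,5}$, and $16a-b\equiv 11 \pmod 7$. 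This is presumably exactly why the constant $364=4\cdot 7\cdot 13$ was chosen. This step is routine but must be done carefully for every pair.

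The third step, which I expect to be the main obstacle, is non-isogeny for \emph{every} $t\ge 1$. Deines' finiteness result only handles all but finitely many $t$. My first attempt is a Frobenius-trace comparison at a small prime $p$ of good reduction. Choose $p$ such that $p\nmid ABC$ for all $t$; candidates are $3$ or $5$, using the nonvanishing above, and otherwise a prime like $11$. The reductions of both curves mod $p$ depend only on $t \bmod p$. So it suffices to tabulate $a_p(E_{(A,B)})$ and $a_p(E_{(A',B')})$ over the $p$ residue classes of $t$ and find $a_p$ unequal in every class.

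If a single prime does not separate all classes, I would combine two primes, or use $\#E(\mathbb{F}_p)$ modulo $4$ or $8$, refined by the full rational $2$-torsion and the $2$-power structure. Another fallback is an argument via the rational $2$-power isogeny graph of Legendre curves. The isogenous curves of such a curve are explicit, and their discriminants could be compared with $2^{-8}(ABC)^2$ to rule out $E_{(A',B')}$ lying in the class. Finally, the "in particular" follows because distinct $t$ give distinct discriminants, so the family is infinite.
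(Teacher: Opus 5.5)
Your first two steps are the paper's argument: the Diamond--Kramer reduction lemma, and coprimality via the resultants $63$ and $65$, whose product is the paper's $4095$. Your identities $A+B=b(256b-a)$ and $A'+B'=a(256b-a)$ explain part (i) more transparently than the SAGE computation. (Minor slip: $a^2\equiv 1+8t \pmod{16}$, so $A'\equiv -1$ holds only modulo $8$; modulo $4$ is all you need.)

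\textbf{The gap.} Non-isogeny remains a plan, and that plan cannot succeed as described. None of your candidate primes is good for all $t$: $3$ divides $a$, $b$ or $4b+a$ according to $t \bmod 3$, and $5$, $11$ divide $a$ for suitable $t$. More fundamentally, every odd prime has a residue class of $t$ on which the two traces coincide.
\begin{enumerate}
\item If $p\nmid 364$ and $p\mid a$, both curves are split multiplicative at $p$. The split conditions are that $B\equiv(16b)^2$, respectively $A'\equiv(4b)^2$, be squares, and both are.
\item The prime $7$ always divides $4b+a$. The split conditions ($-A$, respectively $B'$, a square) agree, since $-AB'=-16ab(16a-b)^2$ with $-16ab\equiv 4 \pmod 7$.
\item Modulo $13$, both curves reduce to models isomorphic to $y^2=x^3-x$.
\end{enumerate}
The local data at an odd $p$ depend only on $t \bmod p$. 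So for any finite set of odd primes, the Chinese remainder theorem yields some $t\geq 1$ lying in an agreeing class for all of them at once, and combining primes cannot help. Your other fallbacks add nothing: $\#E(\mathbb{F}_p)$ modulo $8$ is determined by $a_p$, and the group structure of $E(\mathbb{F}_p)$ is not an isogeny invariant.

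\textbf{The missing idea.} Deines' Theorem 1, as the paper uses it, is explicit rather than a bare finiteness statement: semistable isogenous discriminant twins have conductor at most $37$. This disposes of every $t\geq 1$ at once. Indeed, $a$, $b$, $16a-b$ are pairwise coprime odd integers larger than $1$ dividing $ABC$, so the conductor has at least three odd prime factors and is at least $105$. This conductor bound is also what makes the paper's phrasing via the discriminant tending to infinity valid for all $t$, not just large $t$.

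\textbf{Alternative fix.} Your trace idea does work, but only at $p=2$. For all $t$ one has $A\equiv 3$ and $A'\equiv 7 \pmod 8$. The substitution $x=4X$, $y=8Y+4X$ gives the good model $Y^2+XY=X^3+\tfrac{B-A-1}{4}X^2-\tfrac{AB}{16}X$. Its reduction has $2$ points over $\mathbb{F}_2$ when $A\equiv 3 \pmod 8$ and $4$ points when $A\equiv 7 \pmod 8$. Hence $a_2(E_{(A,B)})=1\neq -1=a_2(E_{(A',B')})$, which proves non-isogeny for every $t$ without appeal to Deines.
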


Two remarks are in order.

\begin{remark}
    The minimal discriminants that we get for different (even small) values of $t$ are rather large. For example, the smallest minimal discriminant (coming from $t=1$) is equal to $$3^4 \cdot 5^2 \cdot  7^2 \cdot  11^2 \cdot  41^2 \cdot  73^2 \cdot  101^2 \cdot  263^2 \cdot  5471^2 \cdot  94099^2.$$ This leads us to believe that there must exist more infinite families of semi-stable non-isogenous discriminant twins.
\end{remark}

\begin{remark}
     Discriminant twins over general number fields have been studied in \cite{bbhdhr} and \cite{dhinrw}. By considering base extensions of our above family we can get infinitely many semi-stable discriminant twins defined over more general number fields.
\end{remark}

\section{Proof of the main theorem}

Before we proceed to the proof of Theorem \ref{maintheorem} we need two lemmas; one on reduction properties of elliptic curves that have a Weierstrass equation of a specific form and one guaranteeing the existence of a certain family of integers satisfying some specific Diophantine conditions.

\begin{lemma}\label{lemmareduction}
    Let $A,B$ be coprime integers with $A \equiv -1 ( \text{mod}\:  4)$ and ord$_2(B)=4$. Consider the elliptic curve $E/\mathbb{Q}$ given by $$y^2=x(x-A)(x+B).$$ Then the minimal discriminant of $E/\mathbb{Q}$ is equal to $$\Delta_E= \frac{1}{2^8}A^2B^2(A+B)^2.$$ Moreover, we have that
    \begin{enumerate}
        \item $E/\mathbb{Q}$ has good reduction modulo $2$.
        \item If $p$ is an odd prime with $p \nmid AB(A+B)$, then $E/\mathbb{Q}$ has good reduction modulo $p$.
        \item If $p$ is an odd prime with  $p \mid AB(A+B)$, then $E$ has multiplicative reduction modulo $p$.
    \end{enumerate}
\end{lemma}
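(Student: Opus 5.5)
The plan is to change coordinates to remove the factor $2^{12}$ from the discriminant, and then treat $p=2$ and odd $p$ separately.

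\textbf{The given model.} Expanding, $y^2=x^3+(B-A)x^2-ABx$. So $a_2=B-A$, $a_4=-AB$ and all other $a_i=0$. For $y^2=x(x-e_1)(x-e_2)$ we have $\Delta=16\,e_1^2e_2^2(e_1-e_2)^2$. With roots $0,A,-B$ this gives
$$\Delta=16A^2B^2(A+B)^2 .$$
Since $\mathrm{ord}_2(B)=4$ and $A,A+B$ are odd, $\mathrm{ord}_2(\Delta)=4+8=12$. We also have $c_4=16(a_2^2-3a_4)=16(A^2+AB+B^2)$.

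\textbf{The prime $2$.} I would use the substitution $x=4X$, $y=8Y+4X$. It gives
$$64Y^2+64XY+16X^2=64X^3+16(B-A)X^2-4ABX.$$
Dividing by $64$:
$$Y^2+XY=X^3+\tfrac{B-A-1}{4}X^2-\tfrac{AB}{16}X .$$
The coefficients are integral: $B-A-1\equiv 0-(-1)-1=0 \pmod 4$ uses $A\equiv -1 \pmod 4$ and $16\mid B$, and $16\mid AB$. This is a change with $u=2$, so the new discriminant is $\Delta/2^{12}=\frac{1}{2^8}A^2B^2(A+B)^2$, which is odd. Hence the new model is integral with $2$-adic valuation of the discriminant equal to $0$. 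This shows the model is minimal at $2$ and $E$ has good reduction at $2$, proving (1).

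\textbf{Odd primes.} For odd $p$ the odd part of $\Delta$ is $A^2B^2(A+B)^2$ (the factor $16$ only matters at $2$). If $p\nmid AB(A+B)$, then $p\nmid\Delta$, giving good reduction and (2). If $p\mid AB(A+B)$, coprimality of $A$ and $B$ forces $p$ to divide exactly one of $A$, $B$, $A+B$.
\begin{itemize}
\item If $p\mid A$, then $A^2+AB+B^2\equiv B^2\not\equiv0$.
\item If $p\mid B$, it is $\equiv A^2\not\equiv 0$.
\item If $p\mid A+B$, then $A^2+AB+B^2=(A+B)^2-AB\equiv A^2\not\equiv0$.
\end{itemize}
Thus $p\nmid c_4$ while $p\mid\Delta$. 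So the model is minimal at $p$ and the reduction is multiplicative, proving (3).

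\textbf{Minimal discriminant.} Combining the three cases, the new model is minimal at every prime. Its discriminant $\frac{1}{2^8}A^2B^2(A+B)^2$ is therefore the minimal discriminant. The only slightly delicate step is finding the right $2$-adic substitution, in particular the shift $y\mapsto 8Y+4X$ that creates the $XY$ term and makes the coefficient $a_2$ integral; everything else is routine.
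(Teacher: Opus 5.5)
Your proof is correct. At odd primes it matches the paper's argument: compute $\Delta=2^4A^2B^2(A+B)^2$ and $c_4=2^4(A^2+AB+B^2)$, then use coprimality to get $p\nmid c_4$ whenever $p\mid AB(A+B)$.

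The difference is at $p=2$. The paper cites Lemma 2 of Diamond--Kramer for good reduction at $2$ and says the minimal discriminant formula ``follows from the previous paragraph.'' You instead write down the change of coordinates $x=4X$, $y=8Y+4X$ (that is, $u=2$, $r=t=0$, $s=1$). You then check that the resulting model $Y^2+XY=X^3+\frac{B-A-1}{4}X^2-\frac{AB}{16}X$ is integral and has odd discriminant $\Delta/2^{12}$.

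This makes the proof self-contained, and it makes $\Delta_E=2^{-8}A^2B^2(A+B)^2$ immediate. The given model has $\mathrm{ord}_2(\Delta)=12$ and $\mathrm{ord}_2(c_4)=4$, so it is not minimal at $2$ and some rescaling is needed. The paper leaves that step inside the citation.

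One small thing to state explicitly: you check minimality at odd $p$ on the original model. It carries over to the new model because $u=2$ is a $p$-adic unit, so $\mathrm{ord}_p(\Delta)$ and $\mathrm{ord}_p(c_4)$ do not change.
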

\begin{proof}
   The discriminant and the $c_4$-invariant of the given Weierstrass equation are $$\Delta=2^4 B^2 A^2 (A + B)^2 \quad \text{and} \quad c_4=2^4(A^2 + AB + B^2),$$ respectively. If $p$ is an odd prime such that $p \nmid AB(A+B)$, then $p \nmid \Delta$ and, hence, $E/\mathbb{Q}$ has good reduction modulo $p$. Moreover, since $A,B$ are coprime, we see that if an odd prime $p$ divides $AB(A+B)$, then $p \nmid c_4$ and, hence $E/\mathbb{Q}$ has multiplicative reduction modulo $p$. This proves Parts $(i)$ and $(ii)$. On the other hand, Part $(i)$ is exactly \cite[Lemma 2]{diamondkramer}. 
   
   Finally, the fact that $\Delta_E$ is the minimal discriminant follows from the previous paragraph.
\end{proof}

\begin{lemma}\label{lemmaconditions}
    Let $t \geq 1$ be an integer and consider the pairs of numbers 
    \begin{align*}
        &A=(364t+1)(16(364t+1)-(364t+5)) \\
        &B=16(16(364t+5)^2-(364t+1)^2)
    \end{align*}
    and 
    \begin{align*}
        &A'=16(364t+5)^2-(364t+1)^2\\
        &B'=16(364t+5)(16(364t+1)-(364t+5)).
    \end{align*}
    Then the following conditions are true for $A,B$ and $A',B'$.
    \begin{enumerate}
        \item $AB(A+B)=A'B'(A'+B')$
        \item $A^2+AB+B^2 \neq A'^2+A'B'+B'^2$
        \item ord$_2(B)=4=$ord$_2(B')$
        \item $A \equiv -1 ( \text{mod}\:  4)$ and $A' \equiv -1 ( \text{mod}\:  4)$
        \item $A,B$ are coprime
        \item $A',B'$ are coprime
        \item As $t \rightarrow \infty$ we have that $AB(A+B) \rightarrow \infty$.
    \end{enumerate}
\end{lemma}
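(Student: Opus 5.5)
Throughout I will write $u=364t+1$ and $v=364t+5$, so that $v-u=4$, $u\equiv v\equiv 1 \pmod 4$, and both $u,v$ are odd. Put $a=16u-v$ and $c=16v^2-u^2$. Then
\[
A=ua,\qquad B=16c,\qquad A'=c,\qquad B'=16va .
\]
The plan is to verify each condition of Lemma \ref{lemmaconditions} directly from these factorizations. Parts (i), (iii), (iv) and (vii) are polynomial identities or parity checks. Part (ii) is a one-off computation. Parts (v) and (vi) are where the specific choice $364=4\cdot 7\cdot 13$ enters.

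\textbf{Parts (i), (iii), (iv), (vii).} For (i), a short computation gives
\[
A+B=16u^2-uv+256v^2-16u^2=v(256v-u), \qquad A'+B'=16v^2-u^2+256uv-16v^2=u(256v-u).
\]
Hence
\[
AB(A+B)=16\,uvac\,(256v-u)=A'B'(A'+B').
\]
For (iii), $c$ and $a$ are odd because $u,v$ are odd. Therefore $\mathrm{ord}_2(B)=\mathrm{ord}_2(B')=4$.

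For (iv), we have $u\equiv 1$ and $a\equiv -v\equiv -1 \pmod 4$, so $A=ua\equiv -1\pmod 4$. Also $A'=16v^2-u^2\equiv -u^2\equiv -1\pmod 8$, since $u^2\equiv 1 \pmod 8$.

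For (vii), $AB(A+B)$ is a product of factors $u,v,a,c,256v-u$ that are positive and tend to infinity with $t$.

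\textbf{Part (ii).} I would expand $A^2+AB+B^2-(A'^2+A'B'+B'^2)$ as a polynomial in $t$. This is routine but is the step I expect to be messiest. The first thing to try is to find its leading coefficient, show it is nonzero, and then exclude integer roots $t\ge 1$. To exclude roots, I would either bound the size of the roots or reduce modulo a small prime where the polynomial is constant and nonzero on $t\equiv$ anything.

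A fallback is to compare the two sides using the identity from (i). If $A^2+AB+B^2=A'^2+A'B'+B'^2$ held, then equal $c_4$ together with equal $\Delta$ would force equal $j$-invariants. Since the curves are isomorphic over $\mathbb{Q}$ only in special cases, it may be simpler to check that the $j$-invariants differ, which again reduces to an explicit polynomial inequality.

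\textbf{Parts (v) and (vi): the crux.} For (v), suppose an odd prime $p$ divides both $A=ua$ and $B=16c$.

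\emph{Case $p\mid u$.} Then $p\mid c$ gives $p\mid 16v^2$, so $p\mid v$, hence $p\mid v-u=4$. This is impossible.

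\emph{Case $p\mid a$.} Then $v\equiv 16u \pmod p$, so
\[
c\equiv (4096-1)u^2=4095\,u^2 \pmod p .
\]
Here $p\nmid u$, since otherwise $p$ would also divide $v$, reducing to the previous case. So $p\mid 4095=3^2\cdot5\cdot7\cdot13$.

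To rule out these primes, write $a=15u-4$. Modulo $3$ and $5$ this is $\equiv -4\not\equiv 0$. Since $u\equiv 1 \pmod{7}$ and $u\equiv 1 \pmod{13}$ (because $7,13\mid 364$), we get $a\equiv 11\not\equiv 0 \pmod{7}$ and $a\equiv 11\not\equiv 0\pmod{13}$. Also $A$ is odd, so $2\nmid \gcd(A,B)$.

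For (vi), a common odd prime $p$ of $A'=c$ and $B'=16va$ either divides $v$ or divides $a$.
\begin{itemize}
\item If $p\mid v$, then $p\mid u^2$, so $p\mid 4$, which is impossible.
\item If $p\mid a$, the same argument as in (v) applies.
\end{itemize}
Again $A'$ is odd, so $2\nmid\gcd(A',B')$.
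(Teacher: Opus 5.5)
Parts (i) and (iii)--(vii) are correct. For (iii)--(vi) you follow the paper (your $u,v$ are the paper's $v,u$): the same case split, the same reduction to $p\mid 4095=3^2\cdot5\cdot7\cdot13$, and the same check that $a=15u-4\not\equiv0$ modulo $3,5,7,13$. For (i) and (vii) your route is better than the paper's SAGE expansion. The factorizations $A+B=v(256v-u)$ and $A'+B'=u(256v-u)$ show by hand that both products equal $16uvac(256v-u)$, and (vii) follows at once. Your separate case $p\mid v$ in (vi) is genuinely needed. $A$ contains the factor $u$ while $B'$ contains $v$, so the factor lists are not literal rearrangements of each other, contrary to what the paper's one-line argument for (vi) suggests.

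The gap is (ii), which you only plan and do not prove. The $t^4$ terms of the two sides cancel, so the sign depends on lower-order terms you would actually have to compute. The $j$-invariant fallback is circular: given (i), the $j$-invariants differ exactly when $A^2+AB+B^2\neq A'^2+A'B'+B'^2$. Your congruence idea closes the gap in two lines if you use the modulus $16$ rather than a prime. We have $16\mid B$, $16\mid B'$, $a\equiv -v \pmod{16}$ and $A'\equiv -u^2 \pmod{16}$. Hence
\[
(A^2+AB+B^2)-(A'^2+A'B'+B'^2)\equiv u^2v^2-u^4=u^2(v-u)(v+u)=8u^2(u+2)\equiv 8 \pmod{16},
\]
since $u$ and $u+2$ are odd. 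Alternatively, expand as the paper does. In your notation the difference is $255c^2-64ac-a^2(256v^2-u^2)$. Substituting $v=u+4$, $a=15u-4$, $c=15u^2+128u+256$ gives $534600u^3+5337360u^2+16957440u+16711680>0$. This is the paper's positive cubic in $t$, rewritten via $u=364t+1$.
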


\begin{proof}
   The proofs of $(iii)$ and $(iv)$ are obvious from the definition of $A,B$ and $A',B'$.

    Proof of $(i)$: A very long computation by hand (or a short computation in SAGE \cite{sagemath}) shows that $AB(A+B)$ and $A'B'(A'+B')$ are both equal to 
    \begin{align*}
        2135261074938421248000t^6 + 130710551904763084800t^5 + 3118842418505748480t^4 \\  + 36219784702087168t^3 + 207018434558208t^2 + 516389304704t + 449082480
    \end{align*}

    Proof of $(ii)$: Using SAGE \cite{sagemath} we see that 
    \begin{align*}
        A^2+AB+B^2=1078327546732800t^4 + 60689835198720t^3 + 1274643054048t^2 \\ + 11819759952t + 40825801
    \end{align*} and
    \begin{align*}
        A'^2+A'B'+B'^2=1078327546732800t^4 + 34906855576320t^3 + 354967118688t^2\\ + 1177870512t + 1284721.
    \end{align*}
    Thus we have that 
    \begin{align*}
        (A^2+AB+B^2)-(A'^2+A'B'+B'^2)= 25782979622400t^3  + 919675935360t^2 \\ + 10641889440t + 39541080 >0,
    \end{align*} because $t$ is positive.

    Proof of $(v)$ and $(vi)$: Let $$u=364t+5 \quad \quad \text{and} \quad \quad v=364t+1.$$
By the definition of $A$ and $B$ we see that that $$A=v(16v-u) \quad  \text{and} \quad B=16(16u^2-v^2).$$   
Similarly by the definition of $A'$ and $B'$ we have that $$A'=16u^2-v^2  \quad \text{and} \quad B'=16v(16v-u).$$

We first show that $A$ and $B$ are coprime. Let $p$ be a prime that divides both $A$ and $B$. Since $A$ is odd, then $p$ must be odd. Moreover, since $A=v(16v-u)$, we find that either $p \mid v$ or $p \mid 16u-v$.\\
\underline{Case 1}: Assume $p \mid v$. Then since $p \mid B$ and $B=16(16u^2-v^2)$ we see that $p \mid 16u^2-v^2$. It follows that $p \mid u$, but this is impossible since $u,v$ are coprime.\\
\underline{Case 2}: Assume $p \mid 16v-u$. Then we have that $$16u^2-v^2 \equiv 16(16v)^2-v^2\equiv (16^3-1)v^2 \equiv 4095v^2=3^2\cdot 5 \cdot 7 \cdot 13 \cdot v^2 (\text{mod} \: p).$$ By the previous case have that $p \nmid v$. Therefore, $p$ must be equal to $3,5,7,$ or $13$. We will now show that $p$ cannot be equal to any of these numbers. First, note that $$16v-u=16(364t+1)-(364t+5)=15(364t+1)+364t+1-364t-5=15v-4.$$ Recall also that $p \mid 16v-u$, i.e., $$15v-4 \equiv 16v-u \equiv 0 (\text{mod} \: p).$$ On the other hand, if $p =3 $ or $5$, then $$15v-4 \equiv -4 (\text{mod} \: p),$$ which is a contradiction. Finally, if $p$ is equal to $7$ or $13$, we have that $$15v-4 \equiv 15(364t+1)-4 \equiv 15(2^2 \cdot 7 \cdot 13 \cdot t+1)-4 \equiv 15 -4 \equiv 11 (\text{mod} \: p),$$ which is also a contradiction. This proves that $A$ and $B$ are coprime.

To prove that $A'$ and $B'$ are coprime we note that the factors of $A$ and $B$ are the same as the factors of $A'$ and $B'$, but rearranged. Since $A$ and $B$ have no common factors, then the same will be true for the factors of $A'$ and $B'$.

Proof of $(vii)$: This follows from the formula for $AB(A+B)$ in terms of $t$ from the proof of Part $(ii)$.

\end{proof}

 We are now ready to proceed to the proof of our main theorem.
 
\begin{proof}[Proof of Theorem \ref{maintheorem}]
    Let $t$ be a positive integer and let $A,B,A',B'$ be as in Lemma \ref{lemmaconditions}. Consider now the elliptic curves $E_{(A,B)}/\mathbb{Q}$ and $E_{(A',B')}/\mathbb{Q}$. According to Lemma \ref{lemmareduction} both elliptic curves are semi-stable. Moreover, again from Lemma \ref{lemmareduction} we see that $E_{(A,B)}/\mathbb{Q}$ has minimal discriminant $$\frac{1}{2^8}A^2B^2(A+B)^2$$ and that the curve $E_{(A',B')}/\mathbb{Q}$ has minimal discriminant $$\frac{1}{2^8}A'^2B'^2(A'+B')^2.$$ Since by Part $(i)$ of Lemma \ref{lemmaconditions} we have that $$AB(A+B)=A'B'(A'+B'),$$ we find that the curves $E_{(A,B)}/\mathbb{Q}$ and $E_{(A',B')}/\mathbb{Q}$ have the same minimal discriminant. Since they are both semi-stable, they must also have the same conductor.

    On the other hand, the $j$-invariants of  $E_{(A,B)}/\mathbb{Q}$ and $E_{(A',B')}/\mathbb{Q}$ are equal to $$j(E_{(A,B)})=256\frac{(A^2+AB+B^2)^3}{A^2B^2(A+B)^2}$$ and $$j(E_{(A',B')})=256\frac{(A'^2+A'B'+B'^2)^3}{A'^2B'^2(A'+B')^2},$$ respectively. It follows from Part $(ii)$ of Lemma \ref{lemmaconditions} that $j(E_{(A,B)}) \neq j(E_{(A',B')})$. Thus, the curves $E_{(A,B)}/\mathbb{Q}$ and $E_{(A',B')}/\mathbb{Q}$ cannot be isomorphic. Moreover, by Part $(vii)$ we have that  their common minimal discriminant  $$\frac{1}{2^8}A^2B^2(A+B)^2$$ approaches $\infty$ as $t \rightarrow \infty$. Thus we really have an infinite family of semi-stable discriminant twins.

    It remains to show that $E_{(A,B)}/\mathbb{Q}$ and $E_{(A',B')}/\mathbb{Q}$ cannot be isogenous. It follows from \cite[Theorem 1]{Deines2018} that if $E_{(A,B)}/\mathbb{Q}$ and $E_{(A',B')}/\mathbb{Q}$ are isogenous, then their conductor is at most $37$. Since the there are only finitely many curves of conductor up to $37$, the number of possible minimal discriminants is bounded. However, by Part $(vii)$ we have that the minimal discriminant $$\frac{1}{2^8}A^2B^2(A+B)^2$$ of $E_{(A,B)}/\mathbb{Q}$ (and of $E_{(A',B')}/\mathbb{Q}$) approaches infinity as $t \rightarrow \infty$. Therefore, the curves $E_{(A,B)}/\mathbb{Q}$ and $E_{(A',B')}/\mathbb{Q}$ cannot be isogenous.

    Finally, the last sentence of the theorem is obvious. This completes our proof.

\end{proof}

In \cite[Question 6.1.2]{Deines2014} Deines asked whether infinitely many $j$-invariants occur among all discriminant twin pairs. Below we give a positive answer to that question as well.

\begin{proposition}
    The set $$\{ j(E) \:\:| \quad  E/\mathbb{Q} \:\: \text{ is a discriminant twin} \}$$ is infinite.
\end{proposition}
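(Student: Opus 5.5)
The plan is to use the family of Theorem \ref{maintheorem} and show that the $j$-invariants $j(E_{(A,B)})$, as $t$ ranges over the positive integers, take infinitely many values. Each $E_{(A,B)}$ is a discriminant twin, so this suffices. By the proof of Theorem \ref{maintheorem},
$$j(E_{(A,B)})=256\frac{(A^2+AB+B^2)^3}{A^2B^2(A+B)^2},$$
where $A,B$ are polynomials in $t$. I will regard this as a rational function $J(t)\in\mathbb{Q}(t)$.

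The key step is to show that $J$ is non-constant. Once that is known, for any fixed value $c$ the equation $J(t)=c$ becomes a nonzero polynomial equation in $t$, since clearing denominators gives $256(A^2+AB+B^2)^3-cA^2B^2(A+B)^2$. Such an equation has only finitely many roots. Hence each value is attained by only finitely many $t$, and infinitely many $t$ produce infinitely many distinct $j$-invariants.

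To see that $J$ is non-constant, compare degrees using the explicit expansions in the proof of Lemma \ref{lemmaconditions}. The numerator $(A^2+AB+B^2)^3$ is the cube of a polynomial of degree $4$, so it has degree $12$. The denominator is the square of $AB(A+B)$, a polynomial of degree $6$, so it also has degree $12$. The degrees therefore agree, and I need a different argument.

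The cleanest route is to evaluate the limit and one other value. As $t\to\infty$, the leading coefficients give
$$J(t)\to 256\cdot\frac{1078327546732800^3}{2135261074938421248000^2},$$
which is a specific rational number. It then suffices to exhibit one value $t_0$ with $J(t_0)$ different from this limit. Alternatively, I can argue via denominators. By Lemma \ref{lemmareduction}, $A^2+AB+B^2$ is coprime to $AB(A+B)$ away from $2$. So the reduced denominator of $j(E_{(A,B)})$ is essentially $A^2B^2(A+B)^2/2^8$, which is the minimal discriminant. By Lemma \ref{lemmaconditions}(vii) this tends to infinity, so the $j$-invariants have unbounded denominators and cannot form a finite set.

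I prefer the denominator argument because it avoids computation. The only point to check is the prime $2$: the power of $2$ cancelled in the fraction is bounded, since $\mathrm{ord}_2(B)=4$ and $A$ and $A+B$ are odd. The denominator of $j$ in lowest terms is therefore at least $A^2(A+B)^2$ times a bounded $2$-power factor, which still tends to infinity. The main obstacle, mild as it is, is making this coprimality precise. For odd $p$ dividing $AB(A+B)$, $p\nmid c_4$ as shown in Lemma \ref{lemmareduction}, so $p$ divides the denominator of $j$ to the full power.
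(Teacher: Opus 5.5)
Your proof is correct, but the route you end up taking differs from the paper's. The paper checks by a SAGE computation that $j(E_{(A,B)})$ is a non-constant element of $\mathbb{Q}(t)$, and concludes that each value is attained for only finitely many $t$. That is the first route you sketch and then set aside; you correctly observe that the degree count ($12$ versus $12$) does not decide non-constancy, so an explicit evaluation would be needed.

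Your denominator argument uses only facts already established for Theorem \ref{maintheorem}. Since $\gcd(A,B)=1$, no odd prime dividing $AB(A+B)$ divides $A^2+AB+B^2$. Since $A$ is odd and $\mathrm{ord}_2(B)=4$, one has $\mathrm{ord}_2\bigl(256(A^2+AB+B^2)^3\bigr)=8=\mathrm{ord}_2\bigl(A^2B^2(A+B)^2\bigr)$. So you can drop the hedging (``essentially'', ``a bounded $2$-power factor''). The denominator of $j(E_{(A,B)})$ in lowest terms is exactly the minimal discriminant $A^2B^2(A+B)^2/2^8$. This is unbounded by Lemma \ref{lemmaconditions}(vii), and a finite set of rationals has bounded denominators.

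What your approach buys is that it is computation-free and conceptual. It is the general fact that a semi-stable curve over $\mathbb{Q}$ has $j$-denominator equal to $|\Delta_{\min}|$. So the proposition follows from the mere existence of semi-stable twins with unbounded minimal discriminants, independently of the specific family. The paper's argument gives the family-specific statement that $j$ is non-constant in $t$, at the cost of relying on a machine check.
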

\begin{proof}
   Using SAGE \cite{sagemath} we can check that the $j$-invariant $j(E_{(A,B)})$ of the curve $E_{(A,B)}/\mathbb{Q}$ is a non-constant rational function in $\mathbb{Q}(t)$. Therefore, $j(E_{(A,B)})$ cannot take the same values infinitely many times and, hence, as $t$ varies in positive integers, the $j$-invariants of the curves $E_{(A,B)}/\mathbb{Q}$ form an infinite set.  This proves our proposition.
\end{proof}

\begin{remark}
    Our family is a family of semi-stable elliptic curves with full $2$-torsion. It would be interesting if other infinite families of semi-stable non-isogenous discriminant twins with other torsion structures can be constructed.
\end{remark}

\bibliographystyle{plain}
\bibliography{bibliography.bib}
\end{document}